\title{Critical heights of destruction for a forest-fire model on the half-plane}
\author{Robert Graf \\ \small \emph{Mathematisches Institut, Ludwig-Maximilians-Universit\"at M\"unchen} \\ \small \emph{Theresienstr. 39, 80333 M\"unchen, Germany} \\ \small \texttt{robert.graf@math.lmu.de}}
\date{}
\theoremstyle{plain}
\newtheorem{thm}{Theorem}
\newtheorem{cor}{Corollary}
\theoremstyle{definition}
\newtheorem{defn}{Definition}
\newcommand{\mb}{\mathbb}
\newcommand{\mc}{\mathcal}
\newcommand{\mf}{\mathbf}
\newcommand{\dist}{\operatorname{dist}}
\renewcommand{\Im}{\operatorname{Im}}
\newcommand{\pparagraph}[1]{\paragraph[#1]{\normalfont\bfseries #1}}
\newcommand{\keywords}{\textbf{Key words. }\medskip}
\newcommand{\subjclass}{\textbf{MSC 2010. }\medskip}
\begin{document}

\maketitle

\begin{abstract}
Consider the following forest-fire model on the upper half-plane of the triangular lattice: Each site can be ``vacant'' or ``occupied by a tree''. At time $0$ all sites are vacant. Then the process is governed by the following random dynamics: Trees grow at rate $1$, independently for all sites. If an occupied cluster reaches the boundary of the upper half-plane, the cluster is instantaneously destroyed, i.e.\ all of its sites turn vacant. At the critical time $t_c := \log 2$ the process is stopped. Now choose an arbitrary infinite cone in the half-plane whose apex lies on the boundary of the half-plane and whose boundary lines are non-horizontal. We prove that in the final configuration a.s.\ only finitely many sites in the cone have been affected by destruction.
\end{abstract}

\keywords{forest-fire model, half-plane, self-organized criticality}

\subjclass{Primary 60K35, 82C22; Secondary 82B43}

\section{Introduction and statement of the main result}

Forest-fire processes were first introduced by B.\ Drossel and F.\ Schwabl in \cite{drossel1992forest} as a toy model for self-organized criticality. Since then, different variants of forest-fire models have been studied both in the physics and mathematics literature. In \cite{graf2014half}, a forest-fire model on the half-plane was obtained as a subsequential limit of forest-fire models on finite size boxes and a corresponding collection of time- and space-dependent random variables, the so-called heights of destruction, were analysed. It was proven that the heights of destruction in semi-infinite tubes show a phase transition in the sense that they are a.s.\ finite \emph{before} a certain critical time and infinite \emph{after} the critical time. In this paper we show that the heights of destruction in semi-infinite tubes and even in infinite cones are a.s.\ finite \emph{at} the critical time. Since the proof requires two critical exponents of site percolation (equations (\ref{eq correlation length}) and (\ref{eq half-plane one-arm})), which are currently only known for the triangular lattice, we formulate the model on the triangular lattice, wheras \cite{graf2014half} uses the square lattice. In the present section, we give a self-contained account of our result (Theorem~\ref{thm critical height of destruction}), in Section~\ref{sec extension of the model} we put this result in the context of \cite{graf2014half}, and in Section~\ref{sec proof critical height} we give the proof of Theorem~\ref{thm critical height of destruction}.

Let $i = \sqrt{-1}$ denote the imaginary unit, let
\begin{align*}
\mb{T} := \left\{ k + le^{i \pi/3}: k, l \in \mb{Z} \right\}
\end{align*}
be the set of sites of the triangular lattice, let
\begin{align*}
\mb{C}^{\operatorname{u}} := \left\{ z \in \mb{C}: \Im z \geq 0 \right\}
\end{align*}
be the upper half-plane and let $\mb{T}^{\operatorname{u}} := \mb{T} \cap \mb{C}^{\operatorname{u}}$ be the set of sites of the half-plane triangular lattice (see Figure~\ref{fig half plane and half lines}). Note that according to our definition the relation $\mb{Z} \subset \mb{T}^{\operatorname{u}} \subset \mb{C}^{\operatorname{u}}$ holds, where $\mb{Z}$ can be interpreted as the inner boundary of $\mb{T}^{\operatorname{u}}$ in $\mb{T}$. Two sites $v, w \in \mb{T}$ of the triangular lattice are said to be \textbf{neighbours} if their Euclidean distance is $1$. For a subset $S \subset \mb{T}^{\operatorname{u}}$ of the half-plane triangular lattice, we write
\begin{align*}
\partial S := \left\{ v \in \mb{T}^{\operatorname{u}} \setminus S: \left( \exists w \in S: \text{$v$ and $w$ are neighbours} \right) \right\}
\end{align*}
for the \textbf{outer boundary} of $S$ in $\mb{T}^{\operatorname{u}}$. For a site $x \in \mb{Z}$, for example, we have $\partial \{x\} = \{x + 1, x + e^{i \pi/3}, x + e^{2i \pi/3}, x - 1\}$.

In this paper we will study random configurations $(\alpha_v)_{v \in \mb{T}^{\operatorname{u}}} \in \{0, 1\}^{\mb{T}^{\operatorname{u}}}$ created by a forest-fire model (defined below). In order to introduce some notation, let $V \in \{\mb{T}^{\operatorname{u}}, \mb{T}\}$, let $(\alpha_v)_{v \in V} \in \{0, 1\}^V$ and let $j \in \{0, 1\}$. A \textbf{$j$-path} in $(\alpha_v)_{v \in V}$ from a site $y \in V$ to a site $z \in V$ is a sequence $v_0, v_1, \ldots, v_l$ of distinct sites in $V$ (where $l \in \mb{N}_0$) such that the following holds:
\begin{compactitem}
\item $v_0 = y$, $v_l = z$;
\item $v_{k - 1}$ is a neighbour of $v_k$ for all $k \in \{1, \ldots, l\}$;
\item $\alpha_{v_k} = j$ for all $k \in \{0, \ldots, l\}$.
\end{compactitem}
If $Y, Z \subset V$ are subsets, then a $j$-path in $(\alpha_v)_{v \in V}$ from $Y$ to $Z$ is simply any $j$-path in $(\alpha_v)_{v \in V}$ from a site $y \in Y$ to a site $z \in Z$. Moreover, the \textbf{cluster} of a site $y \in V$ in $(\alpha_v)_{v \in V}$ is the set of all sites $z$ in $V$ such that there exists a $1$-path in $(\alpha_v)_{v \in V}$ from $y$ to $z$. If $\alpha_y = 0$, then the cluster of $y$ in $(\alpha_v)_{v \in V}$ is just the empty set.

Informally, the forest-fire model may be described as follows: Each site can be ``vacant'' (denoted by $0$) or ``occupied by a tree'' (denoted by $1$). At time $0$ all sites are vacant. Then the process is governed by two competing random mechanisms: On the one hand, trees grow according to rate $1$ Poisson processes, independently for all sites. On the other hand, if an occupied cluster reaches the boundary of the upper half-plane, the cluster is instantaneously destroyed, i.e.\ all of its sites turn vacant. At the critical time $t_c := \log 2$ the process is stopped.

We now give the formal definition of the forest-fire model, which is similar to the definitions in \cite{graf2014half} and \cite{duerre2006existence}. Here, if $I \subset \mb{R}$ is a left-open interval and $I \ni t \mapsto f_t \in \mb{R}$ is a function, we write $f_{t^-} := \lim_{s \uparrow t} f_s$ for the left-sided limit at $t$, provided the limit exists.

\begin{defn} \label{defn forest-fire Tu}
Let $(\eta_{t, z}, G_{t, z})_{t \in [0, t_c], z \in \mb{T}^{\operatorname{u}}}$ be a process with values in $(\{0, 1\} \times \mb{N}_0)^{[0, t_c] \times \mb{T}^{\operatorname{u}}}$, initial condition $\eta_{0, z} = 0$ for $z \in \mb{T}^{\operatorname{u}}$ and boundary condition $\eta_{t, x} = 0$ for $t \in [0, t_c], x \in \mb{Z}$. Suppose that for all $z \in \mb{T}^{\operatorname{u}}$ the process $(\eta_{t, z}, G_{t, z})_{t \in [0, t_c]}$ is càdlàg. For $z \in \mb{T}^{\operatorname{u}}$ and $t \in (0, t_c]$, let $C_{t^-, z}$ denote the cluster of $z$ in the configuration $(\eta_{t^-, w})_{w \in \mb{T}^{\operatorname{u}}}$.

Then $(\eta_{t, z}, G_{t, z})_{t \in [0, t_c], z \in \mb{T}^{\operatorname{u}}}$ is called a \textbf{$\mb{T}^{\operatorname{u}}$-forest-fire process} if the following conditions are satisfied:
\begin{description}[leftmargin=3.3cm,style=sameline,font=\normalfont]
\item[\text{[POISSON]}] The processes $(G_{t, z})_{t \in [0, t_c]}$, $z \in \mb{T}^{\operatorname{u}}$, are independent Poisson processes with rate~$1$.
\item[\text{[GROWTH]}] For all $t \in (0, t_c]$ and all $z \in \mb{T}^{\operatorname{u}} \setminus \mb{Z}$ the following implications hold:
\begin{compactenum}[(i)]
\item $G_{t^-, z} < G_{t, z} \Rightarrow \eta_{t, z} = 1$, \\
i.e.\ the growth of a tree at the site $z$ at time $t$ implies that the site $z$ is occupied at time $t$;
\item $\eta_{t^-, z} < \eta_{t, z} \Rightarrow G_{t^-, z} < G_{t, z}$, \\
i.e.\ if the site $z$ gets occupied at time $t$, there must have been the growth of a tree at the site $z$ at time $t$.
\end{compactenum}
\item[\text{[DESTRUCTION]}] For all $t \in (0, t_c]$ and all $x \in \mb{Z}$, $z \in \mb{T}^{\operatorname{u}} \setminus \mb{Z}$ the following implications hold:
\begin{compactenum}[(i)]
\item $G_{t^-, x} < G_{t, x} \Rightarrow \forall v \in \partial \{x\} \, \forall w \in C_{t^-, v}: \eta_{t, w} = 0$, \\
i.e.\ if a cluster grows to the boundary $\mb{Z}$ at time $t$, it is destroyed at time $t$;
\item $\eta_{t^-, z} > \eta_{t, z} \Rightarrow \exists u \in \partial C_{t^-, z} \cap \mb{Z}: G_{t^-, u} < G_{t, u}$, \\
i.e.\ if a site is destroyed at time $t$, its cluster must have grown to the boundary $\mb{Z}$ at time $t$.
\end{compactenum}
\end{description}
\end{defn}

In order to construct a $\mb{T}^{\operatorname{u}}$-forest-fire process, we start with independent rate $1$ Poisson processes $(G_{t, z})_{t \in [0, t_c]}$, $z \in \mb{T}^{\operatorname{u}}$, on a probability space $(\Omega, \mc{F}, \mf{P})$, where $\mc{F}$ is the completion of the $\sigma$-field generated by $(G_{t, z})_{t \in [0, t_c], z \in \mb{T}^{\operatorname{u}}}$. We first consider the corresponding \textbf{pure growth process}
\begin{align*}
\sigma_{t, z} := 1_{\{G_{t, z} > 0\}} \text{,} \qquad t \in [0, t_c], z \in \mb{T}^{\operatorname{u}} \text{,}
\end{align*}
on $\mb{T}^{\operatorname{u}}$, where $1_A$ denotes the indicator function of an event $A$. For a fixed time $t \in [0, t_c]$, the configuration $\sigma^{\operatorname{u}}_t := (\sigma_{t, z})_{z \in \mb{T}^{\operatorname{u}}}$ is simply independent site percolation on $\mb{T}^{\operatorname{u}}$, where each site is occupied with probability $1 - e^{-t}$. From the RSW theory we know that at the critical time $t_c$ (where sites are occupied with probability $1/2$), for all $x \in \mb{R}$ we have
\begin{align}
\mf{P} \left[ \text{$\sigma^{\operatorname{u}}_{t_c}$ contains infinitely many disjoint $0$-paths from $\mb{Z}_{<x}$ to $\mb{Z}_{>x}$} \right] &= 1 \text{,} \label{eq RSW 0-path} \\
\mf{P} \left[ \text{$\sigma^{\operatorname{u}}_{t_c}$ contains infinitely many disjoint $1$-paths from $\mb{Z}_{<x}$ to $\mb{Z}_{>x}$} \right] &= 1 \text{,} \label{eq RSW 1-path}
\end{align}
where $\mb{Z}_{<x} := \{x' \in \mb{Z}: x' < x\}$ and $\mb{Z}_{>x} := \{x' \in \mb{Z}: x' > x\}$.
Moreover, it is clear that \emph{if} a $\mb{T}^{\operatorname{u}}$-forest-fire process $(\eta_{t, z}, G_{t, z})_{t \in [0, t_c], z \in \mb{T}^{\operatorname{u}}}$ can be constructed from $(G_{t, z})_{t \in [0, t_c], z \in \mb{T}^{\operatorname{u}}}$, then $(\sigma_{t, z})_{t \in [0, t_c], z \in \mb{T}^{\operatorname{u}}}$ dominates $(\eta_{t, z})_{t \in [0, t_c], z \in \mb{T}^{\operatorname{u}}}$ in the sense that
\begin{align} \label{eq domination pure growth process}
\eta_{s, z} \leq \sigma_{s, z} \leq \sigma_{t, z} \text{,} \qquad 0 \leq s \leq t \leq t_c, z \in \mb{T}^{\operatorname{u}} \text{.}
\end{align}
Equations (\ref{eq RSW 0-path}) and (\ref{eq domination pure growth process}) imply that given $(G_{t, z})_{t \in [0, t_c], z \in \mb{T}^{\operatorname{u}}}$, there exists a unique corresponding $\mb{T}^{\operatorname{u}}$-forest-fire process $(\eta_{t, z}, G_{t, z})_{t \in [0, t_c], z \in \mb{T}^{\operatorname{u}}}$, which can be obtained by partitioning $\mb{T}^{\operatorname{u}}$ into a random collection of finite sets separated by $0$-paths in $\sigma^{\operatorname{u}}_{t_c}$ and performing a graphical construction on each of these sets. (Since (\ref{eq RSW 0-path}) is only an a.s.-property, we may have to change $(G_{t, z})_{t \in [0, t_c], z \in \mb{T}^{\operatorname{u}}}$ on a null set to enable the described partitioning of $\mb{T}^{\operatorname{u}}$ everywhere on $\Omega$.) More details on graphical constructions of interacting particle systems can be found in the book \cite{liggett2004interacting} by T.\ Liggett or the paper \cite{harris1972nearest} by T.\ Harris, who was the first to apply this method.

In this paper we analyse the total effect of destruction in the $\mb{T}^{\operatorname{u}}$-forest-fire process up to the final time $t_c$, which is quantified by the so-called heights of destruction:

\begin{defn}
For $t \in [0, t_c]$ and $S \subset \mb{C}^{\operatorname{u}}$, let
\begin{align} \label{eq defn critical height of destruction}
Y_t(S) := \sup \left\{ \Im z: z \in S \cap \mb{T}^{\operatorname{u}}, \exists s \in (0, t]: \eta_{s^-, z} > \eta_{s, z} \right\} \vee 0
\end{align}
be the height up to which sites in $S$ have been destroyed up to time $t$ (where $Y_t(S)$ can take values in $[0, \infty]$). We call $Y_t(S)$ the \textbf{height of destruction} in $S$ up to time $t$.
\end{defn}

Note that $Y_t(S)$ is monotone increasing in $t$ and $S$ in the sense that for $t_1, t_2 \in [0, t_c]$ and $S_1, S_2 \subset \mb{C}^{\operatorname{u}}$ the implication
\begin{align} \label{eq monotonicity height of destruction}
\left( t_1 \leq t_2 \, \wedge \, S_1 \subset S_2 \right) \Rightarrow Y_{t_1}(S_1) \leq Y_{t_2}(S_2)
\end{align}
holds. We will study the height of destruction in cones of the following kind:

\begin{defn}
For $x \in \mb{R}$ and $\varphi \in (0, \pi/2)$, let
\begin{align*}
K^{\varphi}_{x} := \left\{ x + ae^{i \varphi} + be^{i (\pi - \varphi)}: a, b \geq 0 \right\}
\end{align*}
denote the infinite cone whose apex is $x$ and whose boundary lines have angular directions $\varphi$ and $\pi - \varphi$, respectively (see Figure~\ref{fig half plane and half lines}).
\end{defn}

\begin{figure}
\centering
\psset{unit=0.8cm}
\begin{pspicture*}(-0.9,-0.6)(15.9,8.2)
\newgray{verylightgray}{0.85}
% FILLINGS
\pspolygon[linestyle=none,fillstyle=solid,fillcolor=verylightgray](3.6,7.428)(2,0)(0.4,7.428)
\pspolygon[linestyle=none,fillstyle=solid,fillcolor=verylightgray](4.25,7.428)(4.25,0)(5.25,0)(5.25,7.428)
\pspolygon[linestyle=none,fillstyle=solid,fillcolor=verylightgray](5.789,7.428)(7.089,0)(8.111,0)(6.811,7.428)
\pspolygon[linestyle=none,fillstyle=solid,fillcolor=verylightgray](13.712,7.428)(9.423,0)(10.577,0)(14.866,7.428)
% LATTICE
% even sites
\multido{\ix=0+1}{16}{
\multido{\ry=0+1.733}{5}{
\psdots[dotscale=1,dotstyle=*,fillstyle=solid,linecolor=gray](\ix,\ry)
}}
% odd sites
\multido{\rx=0.5+1}{15}{
\multido{\ry=0.866+1.733}{4}{
\psdots[dotscale=1,dotstyle=*,fillstyle=solid,linecolor=gray](\rx,\ry)
}}
% horizontal lines
\multido{\ry=0+0.866}{9}{
\psline[linewidth=0.03,linestyle=solid,linecolor=gray](-0.25,\ry)(15.25,\ry)
}
% 60 degree lines
\multido{\ixd=0+1,\rxu=4.144+1}{12}{
\psline[linewidth=0.03,linestyle=solid,linecolor=gray](\ixd,0)(\rxu,7.178)
}
\psline[linewidth=0.03,linestyle=solid,linecolor=gray](-0.25,6.495)(0.144,7.178)
\psline[linewidth=0.03,linestyle=solid,linecolor=gray](-0.25,4.763)(1.144,7.178)
\psline[linewidth=0.03,linestyle=solid,linecolor=gray](-0.25,3.031)(2.144,7.178)
\psline[linewidth=0.03,linestyle=solid,linecolor=gray](-0.25,1.299)(3.144,7.178)
\psline[linewidth=0.03,linestyle=solid,linecolor=gray](12,0)(15.25,5.629)
\psline[linewidth=0.03,linestyle=solid,linecolor=gray](13,0)(15.25,3.897)
\psline[linewidth=0.03,linestyle=solid,linecolor=gray](14,0)(15.25,2.165)
\psline[linewidth=0.03,linestyle=solid,linecolor=gray](15,0)(15.25,0.433)
% 120 degree lines
\multido{\ixd=4+1,\rxu=-0.144+1}{12}{
\psline[linewidth=0.03,linestyle=solid,linecolor=gray](\ixd,0)(\rxu,7.178)
}
\psline[linewidth=0.03,linestyle=solid,linecolor=gray](0,0)(-0.25,0.433)
\psline[linewidth=0.03,linestyle=solid,linecolor=gray](1,0)(-0.25,2.165)
\psline[linewidth=0.03,linestyle=solid,linecolor=gray](2,0)(-0.25,3.897)
\psline[linewidth=0.03,linestyle=solid,linecolor=gray](3,0)(-0.25,5.629)
\psline[linewidth=0.03,linestyle=solid,linecolor=gray](15.25,1.299)(11.856,7.178)
\psline[linewidth=0.03,linestyle=solid,linecolor=gray](15.25,3.031)(12.856,7.178)
\psline[linewidth=0.03,linestyle=solid,linecolor=gray](15.25,4.763)(13.856,7.178)
\psline[linewidth=0.03,linestyle=solid,linecolor=gray](15.25,6.495)(14.856,7.178)
% CONE AND TUBES
% fat lines
\psline[linewidth=0.06,linestyle=solid,linecolor=black,fillstyle=none](0.4,7.428)(2,0)(3.6,7.428)
\psline[linewidth=0.06,linestyle=solid,linecolor=black,fillstyle=none](4.25,7.428)(4.25,0)(5.25,0)(5.25,7.428)
\psline[linewidth=0.06,linestyle=solid,linecolor=black,fillstyle=none](5.789,7.428)(7.089,0)(8.111,0)(6.811,7.428)
\psline[linewidth=0.06,linestyle=solid,linecolor=black,fillstyle=none](13.712,7.428)(9.423,0)(10.577,0)(14.866,7.428)
% dashed lines
\psline[linewidth=0.03,linestyle=dashed,linecolor=black](2,0)(2,7.428)
\psline[linewidth=0.03,linestyle=dashed,linecolor=black](4.75,0)(4.75,7.428)
\psline[linewidth=0.03,linestyle=dashed,linecolor=black](7.6,0)(6.3,7.428)
\psline[linewidth=0.03,linestyle=dashed,linecolor=black](10,0)(14.289,7.428)
% angels
\psarc[linewidth=0.03,linestyle=solid,linecolor=black](2,0){0.75}{0}{77.8}
\psarc[linewidth=0.03,linestyle=solid,linecolor=black](5.25,0){0.75}{0}{90}
\psarc[linewidth=0.03,linestyle=solid,linecolor=black](8.111,0){0.75}{0}{99.9}
\psarc[linewidth=0.03,linestyle=solid,linecolor=black](10.577,0){0.75}{0}{60}
% dots
\psdots[dotscale=1.5,dotstyle=+,dotangle=45,fillstyle=solid,linecolor=black](2,0)
\psdots[dotscale=1.5,dotstyle=+,dotangle=45,fillstyle=solid,linecolor=black](4.75,0)
\psdots[dotscale=1.5,dotstyle=+,dotangle=45,fillstyle=solid,linecolor=black](7.6,0)
\psdots[dotscale=1.5,dotstyle=+,dotangle=45,fillstyle=solid,linecolor=black](10,0)
% LABELS
\rput[lt](1.85,-0.3){$x$}
\rput[lt](4.55,-0.3){$x_1$}
\rput[lt](7.4,-0.3){$x_2$}
\rput[lt](9.8,-0.3){$x_3$}
\psline[linewidth=0.02,linestyle=dashed,dash=2pt 1pt,linecolor=black]{<-}(2.292,0.236)(3,0.8)
\rput[lt](2.85,1.25){$\varphi$}
\psline[linewidth=0.02,linestyle=dashed,dash=2pt 1pt,linecolor=black]{<-}(5.515,0.265)(6,0.8)
\rput[lt](5.8,1.25){$\varphi_1$}
\psline[linewidth=0.02,linestyle=dashed,dash=2pt 1pt,linecolor=black]{<-}(8.398,0.241)(9,0.8)
\rput[lt](8.8,1.25){$\varphi_2$}
\psline[linewidth=0.02,linestyle=dashed,dash=2pt 1pt,linecolor=black]{<-}(10.902,0.1875)(12,0.8)
\rput[lt](11.8,1.25){$\varphi_3$}
\rput[lt](1.8,8){$K^{\varphi}_x$}
\rput[lt](4.55,8){$M^{\varphi_1}_{x_1}$}
\rput[lt](6.1,8){$M^{\varphi_2}_{x_2}$}
\rput[lt](14.089,8){$M^{\varphi_3}_{x_3}$}
\rput[lt](15.3,6.2){$\mb{T}^{\operatorname{u}}$}
\end{pspicture*}
\caption{The half-plane triangular lattice $\mb{T}^{\operatorname{u}}$, a cone $K^{\varphi}_x$ and three semi-infinite tubes $M^{\varphi_j}_{x_j}$ ($j = 1, 2, 3$).}
\label{fig half plane and half lines}
\end{figure}
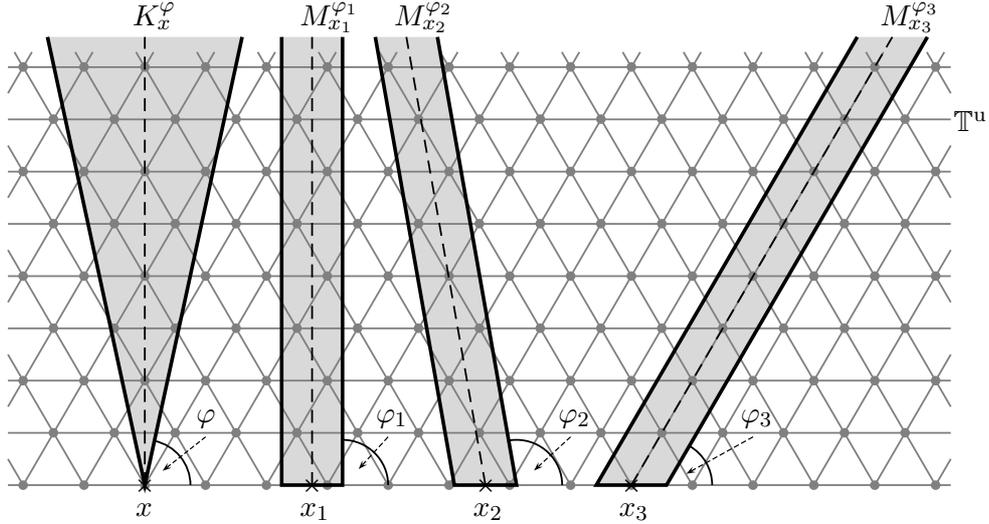

Equation (\ref{eq RSW 1-path}) indicates that $Y_{t_c}(K^{\varphi}_x)$ could potentially be equal to $\infty$. We prove that this case a.s.\ does not occur:

\begin{thm} \label{thm critical height of destruction}
For all $x \in \mb{R}$ and $\varphi \in (0, \pi/2)$ we have $\mf{P} \left[ Y_{t_c}(K^{\varphi}_x) < \infty \right] = 1$.
\end{thm}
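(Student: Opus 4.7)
To prove $Y_{t_c}(K_x^{\varphi}) < \infty$ a.s., it suffices to show that $\mf{P}[Y_{t_c}(K_x^{\varphi}) \geq H] \to 0$ as $H \to \infty$. The starting point is the observation that if a site $z$ is destroyed at a time $s \in (0, t_c]$, then in $\eta_{s^-}$ the cluster of $z$ reaches $\mb{Z}$, and by (\ref{eq domination pure growth process}) there is in particular a $1$-path from $z$ to $\mb{Z}$ in $\sigma_{s^-}$ (hence in $\sigma_{t_c}$). Every destruction event is triggered by a Poisson growth at some $x' \in \mb{Z}$, so a first-moment bound on the number of such events whose destroyed cluster reaches $K_x^{\varphi}$ at height $\geq H$ upper bounds the desired probability.

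The plan is to split $[0, t_c]$ into a subcritical phase $[0, t_c - \delta]$ and a near-critical phase $(t_c - \delta, t_c]$, where $\delta = \delta(H) > 0$ is to be chosen. In the subcritical phase the pure growth process has a finite correlation length given by (\ref{eq correlation length}); for sites at height much larger than this correlation length, the probability of being connected to $\mb{Z}$ in $\sigma_{t_c - \delta}$ decays exponentially, and summing over the $O(h)$ sites of the cone at height $h$ still yields a super-polynomially small bound once $H$ is much larger than the correlation length at time $t_c - \delta$. In the near-critical phase, the expected number of Poisson growth attempts per boundary site is only $O(\delta)$, and the half-plane one-arm estimate (\ref{eq half-plane one-arm}) together with $\eta \leq \sigma$ controls the probability that the resulting destroyed cluster reaches the cone at height $\geq H$. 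The cone's opening angle $\varphi \in (0, \pi/2)$ enters here, because a boundary site at horizontal distance $D$ from the apex must produce an arm of length $\max(H, D)$ to intersect $K_x^{\varphi}$ at height $H$.

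The main obstacle is to choose $\delta = \delta(H)$ and aggregate the two contributions so that both are small and the resulting sum in $x' \in \mb{Z}$ tends to zero. Because the half-plane one-arm exponent is strictly less than $1$, a direct union bound summing critical arm probabilities over $\mb{Z}$ diverges, even after the cone geometry is exploited. The remedy is to work with a \emph{near-critical} arm estimate that fuses (\ref{eq correlation length}) and (\ref{eq half-plane one-arm}): the arm probability at a subcritical parameter agrees with the critical one on scales below the correlation length and is exponentially suppressed above it. Integrating this bound over the near-critical time interval $(t_c - \delta, t_c]$ trades time against arm length; an optimised choice of $\delta$, balanced against the cone aperture, should then produce a bound on the expected number of damaging destruction events that is summable in $x' \in \mb{Z}$ and tends to zero as $H \to \infty$. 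This multiscale interplay between the two critical exponents is where I expect the technical heart of the argument to lie.
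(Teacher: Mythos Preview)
Your proposal identifies the right ingredients --- the half-plane one-arm exponent $1/3$, the correlation-length exponent $4/3$, and the idea of trading a short time window near $t_c$ against a long arm --- and these are exactly what the paper uses. The overall strategy is therefore sound.

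The paper's execution, however, is organised differently and avoids the technical step you flag as ``the heart of the argument''. Rather than fixing a height $H$ and a single $\delta(H)$, the paper indexes over boundary sites: it shows via Borel--Cantelli that for only finitely many $n$ does the site $\lceil x\rceil+n$ trigger a destruction whose cluster reaches $K^\varphi_x$. Since any such cluster must contain a half-plane arm of length $n$ from $\lceil x\rceil+n$, the time split is taken to depend on $n$, namely at $t_c-n^{-3/4+\delta}$ for a fixed small $\delta$. The ``early'' piece is bounded by the full-plane subcritical exponential decay (\ref{eq exponential decay upper bound}) with (\ref{eq correlation length}), giving a stretched-exponential term. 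For the ``late'' piece the paper does \emph{not} use a near-critical arm estimate: it simply observes that if the arm exists at some $t\in[t_c-n^{-3/4+\delta},t_c)$ then by monotonicity it exists in $\sigma^{\operatorname u}_{t_c}$, and that the arm event (which does not involve the boundary site itself) is independent of the Poisson growth at $\lceil x\rceil+n$ during this window. This yields the product $n^{-1/3+o(1)}\cdot n^{-3/4+\delta}=n^{-13/12+\delta+o(1)}$, which is summable.

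So the paper sidesteps your proposed near-critical one-arm estimate by pushing the arm all the way to $t_c$ and paying for it with the independent Poisson factor. Your integration over $(t_c-\delta,t_c]$ with a scale-dependent subcritical arm bound would also work and gives the same exponent, but it requires an input (near-critical half-plane one-arm behaving like the critical one up to the correlation length) that is true yet not among the results cited in the paper; the paper's two-piece split uses only (\ref{eq exponential decay upper bound}), (\ref{eq correlation length}), and (\ref{eq half-plane one-arm}) directly. The moral: make the time cutoff depend on the \emph{arm length} required at each boundary site, and use monotonicity to replace the near-critical estimate by the critical one.
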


Roughly speaking, Theorem~\ref{thm critical height of destruction} means that up to the final time $t_c$, the influence of the destruction mechanism [DESTRUCTION] in Definition~\ref{defn forest-fire Tu} is confined to areas close to the inner boundary $\mb{Z}$ of the half-plane lattice $\mb{T}^{\operatorname{u}}$.

\section{Extension of the model beyond the critical time} \label{sec extension of the model}

It is a natural question to ask how the forest-fire model and the corresponding heights of destruction behave when we let the process run beyond the critical time $t_c$. In this case the local graphical construction above does not work any more so that we must first give thought to the existence of such an extended process. In fact, it is not known whether a process $(\eta_{t, z}, G_{t, z})_{t \in [0, \infty), z \in \mb{T}^{\operatorname{u}}}$ satisfying Definition~\ref{defn forest-fire Tu} for all $t \in [0, \infty)$ exists. However, if we additionally demand that clusters are also destroyed when they are about to become infinite, then the extension does exist. This motivates the following definition:

\begin{defn} \label{defn extended forest-fire Tu}
Let $(\eta_{t, z}, G_{t, z})_{t \in [0, \infty), z \in \mb{T}^{\operatorname{u}}}$ be a process with values in $(\{0, 1\} \times \mb{N}_0)^{[0, \infty) \times \mb{T}^{\operatorname{u}}}$, initial condition $\eta_{0, z} = 0$ for $z \in \mb{T}^{\operatorname{u}}$ and boundary condition $\eta_{t, x} = 0$ for $t \in [0, \infty), x \in \mb{Z}$. Suppose that for all $z \in \mb{T}^{\operatorname{u}}$ the process $(\eta_{t, z}, G_{t, z})_{t \in [0, \infty)}$ is càdlàg. For $z \in \mb{T}^{\operatorname{u}}$ and $t \in (0, \infty)$, let $C_{t^-, z}$ denote the cluster of $z$ in the configuration $(\eta_{t^-, w})_{w \in \mb{T}^{\operatorname{u}}}$.

Then $(\eta_{t, z}, G_{t, z})_{t \in [0, \infty), z \in \mb{T}^{\operatorname{u}}}$ is called an \textbf{extended $\mb{T}^{\operatorname{u}}$-forest-fire process} if the following conditions are satisfied:
\begin{description}[leftmargin=3.3cm,style=sameline,font=\normalfont]
\item[\text{[POISSON]}] The processes $(G_{t, z})_{t \in [0, \infty)}$, $z \in \mb{T}^{\operatorname{u}}$, are independent Poisson processes with rate~$1$.
\item[\text{[TRANSL-INV]}] The distribution of $(\eta_{t, z}, G_{t, z})_{t \in [0, \infty), z \in \mb{T}^{\operatorname{u}}}$ is invariant under translations along the real line, i.e.\ the processes $(\eta_{t, z}, G_{t, z})_{t \in [0, \infty), z \in \mb{T}^{\operatorname{u}}}$ and $(\eta_{t, z + 1}, G_{t, z + 1})_{t \in [0, \infty), z \in \mb{T}^{\operatorname{u}}}$ have the same distribution.
\item[\text{[GROWTH]}] For all $t \in (0, \infty)$ and all $z \in \mb{T}^{\operatorname{u}} \setminus \mb{Z}$ the following implications hold:
\begin{compactenum}[(i)]
\item $G_{t^-, z} < G_{t, z} \Rightarrow \eta_{t, z} = 1$, \\
i.e.\ the growth of a tree at the site $z$ at time $t$ implies that the site $z$ is occupied at time $t$;
\item $\eta_{t^-, z} < \eta_{t, z} \Rightarrow G_{t^-, z} < G_{t, z}$, \\
i.e.\ if the site $z$ gets occupied at time $t$, there must have been the growth of a tree at the site $z$ at time $t$.
\end{compactenum}
\item[\text{[DESTRUCTION]}] For all $t \in (0, \infty)$ and all $x \in \mb{Z}$, $z \in \mb{T}^{\operatorname{u}} \setminus \mb{Z}$ the following implications hold:
\begin{compactenum}[(i)]
\item $\left(G_{t^-, x} < G_{t, x} \Rightarrow \forall v \in \partial \{x\} \,\forall w \in C_{t^-, v}: \eta_{t, w} = 0 \right) \wedge$ \\
$\left( |C_{t^-, z}| = \infty \Rightarrow \forall w \in C_{t^-, z}: \eta_{t, w} = 0 \right)$, \\
i.e.\ if a cluster grows to the boundary $\mb{Z}$ at time $t$, it is destroyed at time $t$, and if a cluster is about to become infinite at time $t$, it is destroyed at time~$t$;
\item $\eta_{t^-, z} > \eta_{t, z} \Rightarrow \left( \left( \exists u \in \partial C_{t^-, z} \cap \mb{Z}: G_{t^-, u} < G_{t, u} \right) \vee |C_{t^-, z}| = \infty \right)$, \\
i.e.\ if a site is destroyed at time $t$, its cluster either must have grown to the boundary $\mb{Z}$ at time $t$ or it must have been about to become infinite at time~$t$.
\end{compactenum}
\end{description}
\end{defn}

The existence of an extended $\mb{T}^{\operatorname{u}}$-forest-fire process follows from Theorem~1.2 in \cite{graf2014half}. In this reference, an analogous process on the upper half-plane of the square lattice $\mb{Z}^2$ is shown to exist and the proof can be directly transferred to the triangular lattice. Conversely, it is currently not known whether extended $\mb{T}^{\operatorname{u}}$-forest-fire processes are unique in distribution. This is the reason why we have included the translation-invariance property [TRANSL-INV] in Definition~\ref{defn extended forest-fire Tu}, wheras for the unextended $\mb{T}^{\operatorname{u}}$-forest-fire process, the translation-invariance is already implied by the uniqueness of this process. Since extended $\mb{T}^{\operatorname{u}}$-forest-fire processes are also dominated by the corresponding pure growth process, in which there are a.s.\ no infinite clusters until the critical time $t_c$, the destruction of infinite clusters in extended $\mb{T}^{\operatorname{u}}$-forest-fire processes a.s.\ does not occur until $t_c$, i.e.\ [DESTRUCTION] in Definition~\ref{defn extended forest-fire Tu} and [DESTRUCTION] in Definition~\ref{defn forest-fire Tu} a.s.\ coincide until $t_c$. (In fact, it is unclear whether the destruction of infinite clusters in extended $\mb{T}^{\operatorname{u}}$-forest-fire processes ever occurs with positive probability.) Hence, if $(\eta_{t, z}, G_{t, z})_{t \in [0, \infty), z \in \mb{T}^{\operatorname{u}}}$ is an extended $\mb{T}^{\operatorname{u}}$-forest-fire process, then restricted to the complement of a null set, $(\eta_{t, z}, G_{t, z})_{t \in [0, t_c], z \in \mb{T}^{\operatorname{u}}}$ is a $\mb{T}^{\operatorname{u}}$-forest-fire process.

For the remainder of this section, let $(\eta_{t, z}, G_{t, z})_{t \in [0, \infty), z \in \mb{T}^{\operatorname{u}}}$ be an extended $\mb{T}^{\operatorname{u}}$-forest-fire process on a probability space $(\Omega, \mc{F}, \mf{P})$, where $\mc{F}$ is the completion of the $\sigma$-field generated by $(\eta_{t, z}, G_{t, z})_{t \in [0, \infty), z \in \mb{T}^{\operatorname{u}}}$. For $t \in [0, \infty)$ and $S \subset \mb{C}^{\operatorname{u}}$, we define the corresponding height of destruction $Y_t(S)$ in $S$ up to time $t$ as in equation (\ref{eq defn critical height of destruction}). Moreover, for $z \in \mb{C}$ and $S \subset \mb{C}$, we define the distance between $z$ and $S$ by
\begin{align} \label{eq dist}
\dist(z, S) := \inf \left\{ |z - z'|: z' \in S \right\} \text{.}
\end{align}
We now look at the height of destruction in semi-infinite tubes of the following kind:

\begin{defn}
For $x \in \mb{R}$ and $\varphi \in (0, \pi)$, let
\begin{align*}
L^{\varphi}_{x} := \left\{ x + y e^{i \varphi}: y \geq 0 \right\}
\end{align*}
denote the half-line with starting point $x$ and angular direction $\varphi$ and let
\begin{align*}
M^{\varphi}_{x} := \left\{ z \in \mb{C}^{\operatorname{u}}: \dist(z, L^{\varphi}_{x}) \leq \frac{1}{2} \right\}
\end{align*}
denote the semi-infinite tube with centre line $L^{\varphi}_{x}$ and width $1$ (see Figure~\ref{fig half plane and half lines}).
\end{defn}

Combining Theorem~\ref{thm critical height of destruction} with results in \cite{graf2014half}, we obtain the following statement:

\begin{cor} \label{cor critical height of destruction}
For $x \in \mb{R}$ and $\varphi \in (0, \pi)$ we have
\begin{align}
\mf{P} \left[ \forall t \in [0, t_c]: Y_t(M^{\varphi}_{x}) < \infty \right] &= 1 \text{,} \label{cor critical height eq subcritical} \\
\mf{P} \left[ \forall t \in (t_c, \infty): Y_t(M^{\varphi}_{x}) = \infty \right] &= 1 \text{.} \label{cor critical height eq supercritical}
\end{align}
\end{cor}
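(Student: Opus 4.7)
The plan is to handle the two assertions by different mechanisms: (\ref{cor critical height eq subcritical}) will follow from Theorem~\ref{thm critical height of destruction} via an elementary geometric containment of the tube in a cone, while (\ref{cor critical height eq supercritical}) will be inherited from the supercritical half of the phase transition already proved for the square-lattice model in \cite{graf2014half}.

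For (\ref{cor critical height eq subcritical}), I would first note that, as observed at the end of Section~\ref{sec extension of the model}, the restriction of the extended process to $[0, t_c]$ is a.s.\ a $\mb{T}^{\operatorname{u}}$-forest-fire process in the sense of Definition~\ref{defn forest-fire Tu}, so Theorem~\ref{thm critical height of destruction} applies to $Y_{t_c}(K^{\varphi'}_{x'})$ for every $\varphi' \in (0, \pi/2)$ and $x' \in \mb{R}$. By the monotonicity (\ref{eq monotonicity height of destruction}) in $t$, it then suffices to show $Y_{t_c}(M^{\varphi}_{x}) < \infty$ a.s. Since $\varphi \in (0, \pi)$, I would pick $\varphi' \in (0, \pi/2)$ with $\varphi' < \min(\varphi, \pi - \varphi)$; the direction $e^{i\varphi}$ of the tube's axis then lies strictly inside the angular opening $(\varphi', \pi - \varphi')$ of the cone $K^{\varphi'}_{x}$. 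A short comparison of horizontal cross-sections at height $y$ yields an $H < \infty$ with
\begin{align*}
M^{\varphi}_{x} \cap \left\{ z \in \mb{C}^{\operatorname{u}} : \Im z > H \right\} \subset K^{\varphi'}_{x} \text{,}
\end{align*}
and the complementary piece $M^{\varphi}_{x} \cap \{ z : \Im z \leq H \}$ is bounded, hence meets only finitely many sites of $\mb{T}^{\operatorname{u}}$. Combining the monotonicity (\ref{eq monotonicity height of destruction}) in $S$ with Theorem~\ref{thm critical height of destruction} then gives $Y_{t_c}(M^{\varphi}_{x}) \leq Y_{t_c}(K^{\varphi'}_{x}) \vee H < \infty$ almost surely.

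For (\ref{cor critical height eq supercritical}), monotonicity of $Y_t$ in $t$ reduces the event $\{\forall t \in (t_c, \infty): Y_t(M^{\varphi}_{x}) = \infty\}$ to the countable intersection $\bigcap_{n \geq 1} \{ Y_{t_c + 1/n}(M^{\varphi}_{x}) = \infty \}$, so it is enough to fix $t > t_c$ and show $\mf{P}[Y_t(M^{\varphi}_{x}) = \infty] = 1$. This is precisely the supercritical statement of the phase transition established for the $\mb{Z}^2$-model in \cite{graf2014half}; the argument given there uses only the translation invariance [TRANSL-INV] of the extended process, a Borel--Cantelli estimate applied to far-away sites in the tube, and the domination (\ref{eq domination pure growth process}) by supercritical site percolation, all of which are equally available on $\mb{T}^{\operatorname{u}}$. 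I expect the only mild technical step to be the geometric verification in (\ref{cor critical height eq subcritical}); beyond that, the corollary is essentially a packaging of Theorem~\ref{thm critical height of destruction} together with the existing supercritical half-plane result of \cite{graf2014half}.
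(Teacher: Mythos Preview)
Your approach is essentially the same as the paper's. For (\ref{cor critical height eq subcritical}) the paper also picks an angle $\alpha < \min\{\varphi, \pi - \varphi\}$ and observes that $(M^{\varphi}_{x} \setminus K^{\alpha}_{x}) \cap \mb{T}^{\operatorname{u}}$ is finite, then invokes monotonicity and Theorem~\ref{thm critical height of destruction}; your explicit cut at height $H$ is just a slightly more detailed way of saying the same thing. For (\ref{cor critical height eq supercritical}) both you and the paper defer to the argument behind Theorem~1.5 of \cite{graf2014half}.

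One remark on your description of the supercritical ingredients: the paper singles out as the \emph{crucial} geometric fact that any $1$-path crossing from the left of $M^{\varphi}_{x}$ to the right of $M^{\varphi}_{x}$ must contain a site of $M^{\varphi}_{x}$ (this is precisely why the tube has width~$1$). By contrast, the domination (\ref{eq domination pure growth process}) that you cite goes in the direction $\eta \leq \sigma$; it tells you that $\eta$-clusters are contained in $\sigma$-clusters, which is the wrong way round to produce destruction events in $\eta$ from supercritical behaviour of $\sigma$. So while your overall plan is correct, the list of ingredients you name for the supercritical half is slightly off: translation invariance and the width-$1$ crossing property are what actually carry the argument from \cite{graf2014half} over to the present setting.
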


In other words, the height of destruction in the semi-infinite tube $M^{\varphi}_{x}$ shows a phase transition in the sense that it is finite until the critical time $t_c$ and becomes infinite immediately after~$t_c$.

\begin{proof}[Proof of Corollary~\ref{cor critical height of destruction}]
Let $x \in \mb{R}$ and $\varphi \in (0, \pi)$. Pick $\alpha \in (0, \pi/2)$ such that $\alpha < \min \{\varphi, \pi - \varphi\}$ holds. Since $(M^{\varphi}_{x} \setminus K^{\alpha}_{x}) \cap \mb{T}^{\operatorname{u}}$ contains only finitely many sites and since the height of destruction is monotone increasing in the sense of (\ref{eq monotonicity height of destruction}), equation (\ref{cor critical height eq subcritical}) is an immediate consequence of Theorem~\ref{thm critical height of destruction}.

Equation (\ref{cor critical height eq supercritical}) can be proved along the lines of Theorem~1.5 in \cite{graf2014half}. In this reference, a corresponding statement is proved for a slightly different setting, namely for an analogous forest-fire model on the upper half-plane of the square lattice $\mb{Z}^2$ and for $x \in \mb{Z}$, $\varphi = \pi/2$. (The associated height of destruction up to time $t$ is denoted by $Y_{t, x}$ in \cite{graf2014half}.) However, the backbone of the proof in \cite{graf2014half} does not depend on these particular assumptions. A crucial property of $M^{\varphi}_x$ in the course of the proof is the fact that any $1$-path which crosses from the left of $M^{\varphi}_x$ to the right of $M^{\varphi}_x$ has at least one site in $M^{\varphi}_x$; this is the reason why we have defined $M^{\varphi}_x$ to have width $1$.
\end{proof}

\section{Proof of Theorem~\ref{thm critical height of destruction}} \label{sec proof critical height}

Let $x \in \mb{R}$ and $\varphi \in (0, \pi/2)$. Throughout this section, we consider the following setting: Let $(G_{t, z})_{t \in [0, t_c]}$, $z \in \mb{T}$, be independent Poisson processes on a probability space $(\Omega, \mc{F}, \mf{P})$, where $\mc{F}$ is the completion of the $\sigma$-field generated by $(G_{t, z})_{t \in [0, t_c], z \in \mb{T}}$, and let
\begin{align*}
\sigma_{t, z} := 1_{\{G_{t, z} > 0\}} \text{,} \qquad t \in [0, t_c], z \in \mb{T} \text{,}
\end{align*}
be the corresponding pure growth process on $\mb{T}$. (It will be convenient to have these processes on the whole triangular lattice $\mb{T}$ and not just on $\mb{T}^{\operatorname{u}}$.) Moreover, let $(\eta_{t, z}, G_{t, z})_{t \in [0, t_c], z \in \mb{T}^{\operatorname{u}}}$ be the corresponding $\mb{T}^{\operatorname{u}}$-forest-fire process (for the construction of which $(G_{t, z})_{t \in [0, t_c], z \in \mb{T}^{\operatorname{u}}}$ may have to be changed on a null set) and let $Y_{t_c}(K^{\varphi}_x)$ be the associated height of destruction in the cone $K^{\varphi}_x$ up to the critical time $t_c$. For $t \in [0, t_c]$, we henceforth abbreviate $\eta_t := (\eta_{t, z})_{z \in \mb{T}^{\operatorname{u}}}$, $\sigma^{\operatorname{u}}_t := (\sigma_{t, z})_{z \in \mb{T}^{\operatorname{u}}}$ and $\sigma_t := (\sigma_{t, z})_{z \in \mb{T}}$.

We will frequently use the following terminology: Let $V \in \{\mb{T}^{\operatorname{u}}, \mb{T}\}$, let $(\alpha_v)_{v \in V} \in \{0, 1\}^V$ be a random configuration and let $w \in V$, $S \subset \mb{C}$. Then we write $\{w \leftrightarrow S \text{ in } (\alpha_v)_{v \in V}\}$ (in words: $w$ is connected to $S$ in $(\alpha_v)_{v \in V}$) for the event that there exists a $1$-path in $(\alpha_v)_{v \in V}$ from a site $y \in V$ to a site $z \in V$ such that $y$ is a neighbour of $w$ and $\dist(z, S) \leq 1$ holds, where $\dist(z, S)$ is defined as in (\ref{eq dist}). Note that our definition of $\{w \leftrightarrow S \text{ in } (\alpha_v)_{v \in V}\}$ does not impose any condition on the site $w$ itself.

\subsection{Tools from percolation theory}

We will need the following results from percolation theory:

\pparagraph{Exponential decay in the subcritical regime.}
For $z \in \mb{T}$ and $n \in \mb{N}$, let
\begin{align*}
S^{\varphi}_n(z)
:= &\left\{ z + u + ve^{i\varphi}: u, v \in \mb{R}, |u| = n, |v| \leq n \right\} \\
&\cup \left\{ z + u + ve^{i\varphi}: u, v \in \mb{R}, |u| \leq n, |v| = n \right\}
\end{align*}
denote the surface of the rhombus with centre $z$, side length $2n$ and sides parallel to the $\mb{R}$-basis $\{1, e^{i\varphi}\}$ of $\mb{C}$. There exists a function $\xi_{\varphi}: (0, t_c) \rightarrow (0, \infty)$ such that for all $t \in (0, t_c)$ the full-plane one-arm event $\{0 \leftrightarrow S^{\varphi}_n(0) \text{ in } \sigma_t\}$ satisfies 
\begin{align} \label{eq exponential decay limit}
\lim_{n \rightarrow \infty} - \frac{\log \mf{P} \left[ 0 \leftrightarrow S^{\varphi}_n(0) \text{ in } \sigma_t \right]}{n} = \frac{1}{\xi_{\varphi}(t)} \text{;}
\end{align}
$\xi_{\varphi}(t)$ is called the correlation length of the configuration $\sigma_t$. Moreover, there exists a universal constant $c \in (0, \infty)$ such that for all $t \in (0, t_c)$ and $n \in \mb{N}$
\begin{align} \label{eq exponential decay upper bound}
\mf{P} \left[ 0 \leftrightarrow S^{\varphi}_n(0) \text{ in } \sigma_t \right]
\leq cn \exp \left( -\frac{n}{\xi_{\varphi}(t)} \right)
\end{align}
holds. For the proof of (\ref{eq exponential decay limit}) and (\ref{eq exponential decay upper bound}) the reader is referred to \cite{grimmett2010percolation}, Section 6.1. (In this reference, analogous statements are proven for bond percolation on the square lattice $\mb{Z}^2$ and $\varphi = \pi/2$ but the proofs can be transferred one-to-one to our setting.)

\pparagraph{Critical exponents.}
Near the critical time $t_c$, the correlation length behaves like
\begin{align} \label{eq correlation length}
\xi_{\varphi}(t)
= (t_c - t)^{-4/3 + o(1)} \qquad \text{for } t \uparrow t_c \text{.}
\end{align}
At the critical time $t_c$, the probability of the half-plane one-arm event $\{0 \leftrightarrow S^{\varphi}_n(0) \cap \mb{C}^{\operatorname{u}} \text{ in } \sigma^{\operatorname{u}}_{t_c}\}$ decays like
\begin{align} \label{eq half-plane one-arm}
\mf{P} \left[ 0 \leftrightarrow S^{\varphi}_n(0) \cap \mb{C}^{\operatorname{u}} \text{ in } \sigma^{\operatorname{u}}_{t_c} \right]
= n^{-1/3 + o(1)} \qquad \text{for } n \rightarrow \infty \text{.}
\end{align}
Equations (\ref{eq correlation length}) and (\ref{eq half-plane one-arm}) were first proven by S.\ Smirnov and W.\ Werner in \cite{smirnov2001near} (Theorems~1(iv) and 3) and are also discussed in the survey article \cite{nolin2008near} (Theorems~33(i) and 22). (In these references, the statements are not based on the rhombus $S^{\varphi}_n(0)$ used here but on the circle with centre $0$ and radius $n$ and the rhombus $S^{\pi/3}_n(0)$ with angle $\pi/3$, respectively. In fact, the exact shape of the boundary line is irrelevant. However, the current proof of (\ref{eq correlation length}) and (\ref{eq half-plane one-arm}) only works for the triangular lattice.)

\subsection{The core of the proof of Theorem~\ref{thm critical height of destruction}}

We now prove Theorem~\ref{thm critical height of destruction}, i.e.\ we show that $\mf{P}[Y_{t_c}(K^{\varphi}_x) = \infty] = 0$. Since the $\mb{T}^{\operatorname{u}}$-forest-fire process $(\eta_{t, z}, G_{t, z})_{t \in [0, t_c], z \in \mb{T}^{\operatorname{u}}}$ is dominated by the corresponding pure growth process $(\sigma_{t, z})_{t \in [0, t_c], z \in \mb{T}^{\operatorname{u}}}$ in the sense of equation (\ref{eq domination pure growth process}), a.s.\ all destroyed clusters in $(\eta_{t, z}, G_{t, z})_{t \in [0, t_c], z \in \mb{T}^{\operatorname{u}}}$ are finite. Hence, if $Y_{t_c}(K^{\varphi}_x) = \infty$ holds, then a.s.\ infinitely many clusters which reach from $K^{\varphi}_x$ to the inner boundary $\mb{Z}$ must have been destroyed up to the critical time $t_c$. Moreover, since there are only finitely many jumps in a rate $1$ Poisson process up to time $t_c$, every site on the inner boundary $\mb{Z}$ can only be the origin of finitely many destruction events up to time $t_c$. This implies the inclusion
\begin{align*}
\left\{ Y_{t_c}(K^{\varphi}_x) = \infty \right\}
\stackrel{\text{a.s.}}{\subset} \limsup_{n \rightarrow \infty} \mc{A}^{\varphi}_{x, n} \cup \limsup_{n \rightarrow \infty} \mc{A}^{\varphi}_{x, -n} \text{,}
\end{align*}
where we define
\begin{align*}
\mc{A}^{\varphi}_{x, n} &:= \left\{ \exists t \in [0, t_c): \lceil x \rceil + n \leftrightarrow K^{\varphi}_{x} \text{ in } \eta_t, \operatorname{G}_{t, t_c, \lceil x \rceil + n} \right\} \text{,} \\
\mc{A}^{\varphi}_{x, -n} &:= \left\{ \exists t \in [0, t_c): \lfloor x \rfloor - n \leftrightarrow K^{\varphi}_{x} \text{ in } \eta_t, \operatorname{G}_{t, t_c, \lfloor x \rfloor - n} \right\}
\end{align*}
for $n \in \mb{N}$ and use the abbreviation
\begin{align*}
\operatorname{G}_{s, t, z} := \left\{ G_{s, z} < G_{t, z} \right\}
\end{align*}
for $0 \leq s < t \leq t_c$ and $z \in \mb{T}^{\operatorname{u}}$. By symmetry, we have $\mf{P}[\mc{A}^{\varphi}_{x, -n}] = \mf{P}[\mc{A}^{\varphi}_{-x, n}]$ for all $n \in \mb{N}$; consequently, it suffices to prove
\begin{align} \label{thm critical height of destruction eq to prove}
\mf{P} \left[ \limsup_{n \rightarrow \infty} \mc{A}^{\varphi}_{x, n} \right] = 0 \text{.}
\end{align}
Applying equation (\ref{eq domination pure growth process}) once more and using the topological fact that any connection $\lceil x \rceil + n \leftrightarrow K^{\varphi}_{x}$ necessarily contains a connection $\lceil x \rceil + n \leftrightarrow S^{\varphi}_n(\lceil x \rceil + n) \cap \mb{C}^{\operatorname{u}}$, we obtain the inclusions
\begin{align}
\mc{A}^{\varphi}_{x, n}
&\subset \left\{ \exists t \in [0, t_c): \lceil x \rceil + n \leftrightarrow K^{\varphi}_{x} \text{ in } \sigma^{\operatorname{u}}_t, \operatorname{G}_{t, t_c, \lceil x \rceil + n} \right\} \nonumber \\
&\subset \left\{ \exists t \in [0, t_c): \lceil x \rceil + n \leftrightarrow S^{\varphi}_n(\lceil x \rceil + n) \cap \mb{C}^{\operatorname{u}} \text{ in } \sigma^{\operatorname{u}}_t, \operatorname{G}_{t, t_c, \lceil x \rceil + n} \right\}
=: \mc{B}^{\varphi}_{x, n} \text{.} \label{thm critical height of destruction eq A subset B}
\end{align}
Now choose an arbitrary $\delta \in (0, 1/12)$ and consider the event
\begin{align*}
\mc{C}^{\varphi, \delta}_{x, n}
:= \left\{ \exists t \in [0, t_c - n^{-3/4 + \delta}): \lceil x \rceil + n \leftrightarrow S^{\varphi}_n(\lceil x \rceil + n) \cap \mb{C}^{\operatorname{u}} \text{ in } \sigma^{\operatorname{u}}_t \right\}
\end{align*}
that the connection $\lceil x \rceil + n \leftrightarrow S^{\varphi}_n(\lceil x \rceil + n) \cap \mb{C}^{\operatorname{u}}$ in the pure growth process already occurs before time $t_c - n^{-3/4 + \delta}$ (where $n \in \mb{N}$ is assumed to be large enough to guarantee $t_c - n^{-3/4 + \delta} > 0$). We can estimate the probability of this event from above as follows:
\begin{align*}
\mf{P} \left[ \mc{C}^{\varphi, \delta}_{x, n} \right]
&\leq \mf{P} \left[ \exists t \in [0, t_c - n^{-3/4 + \delta}): 0 \leftrightarrow S^{\varphi}_n(0) \text{ in } \sigma_t \right] \\
&= \mf{P} \left[ 0 \leftrightarrow S^{\varphi}_n(0) \text{ in } \sigma_{t_c - n^{-3/4 + \delta}} \right] \\
&\leq cn \exp \left( -\frac{n}{\xi_{\varphi} \left( t_c - n^{-3/4 + \delta} \right)} \right) \\
&= cn \exp \left( -\frac{n}{\left( n^{-3/4 + \delta} \right)^{-4/3 + o(1)}} \right) \qquad \text{for } n \rightarrow \infty \\
&= cn \exp \left( -n^{(4/3) \delta + o(1)} \right) \qquad \text{for } n \rightarrow \infty \text{.}
\end{align*}
Here we first drop the condition that the connection occurs in the upper half-plane $\mb{C}^{\operatorname{u}}$ and use the translation-invariance of the pure growth process; then we employ the fact that $\sigma_t$ is monotone increasing in $t$; finally we successively apply equations (\ref{eq exponential decay upper bound}) and (\ref{eq correlation length}). In particular, this estimate implies
\begin{align*}
\sum_{n = 1}^{\infty} \mf{P} \left[ \mc{C}^{\varphi, \delta}_{x, n} \right] < \infty
\end{align*}
and hence
\begin{align*}
\mf{P} \left[ \limsup_{n \rightarrow \infty} \mc{C}^{\varphi, \delta}_{x, n} \right] = 0
\end{align*}
by the Borel-Cantelli lemma. Regarding the limes superior of the events $\mc{B}^{\varphi}_{x, n}$ (defined in (\ref{thm critical height of destruction eq A subset B})), we thus conclude
\begin{align} \label{thm critical height of destruction eq limsup B}
\limsup_{n \rightarrow \infty} \mc{B}^{\varphi}_{x, n}
\stackrel{\text{a.s.}}{\subset} \limsup_{n \rightarrow \infty} \left( \mc{B}^{\varphi}_{x, n} \setminus \mc{C}^{\varphi, \delta}_{x, n} \right)
\subset \limsup_{n \rightarrow \infty} \mc{D}^{\varphi, \delta}_{x, n} \text{,}
\end{align}
where we abbreviate
\begin{align*}
\mc{D}^{\varphi, \delta}_{x, n}
:= \left\{ \exists t \in [t_c - n^{-3/4 + \delta}, t_c): \lceil x \rceil + n \leftrightarrow S^{\varphi}_n(\lceil x \rceil + n) \cap \mb{C}^{\operatorname{u}} \text{ in } \sigma^{\operatorname{u}}_t, \operatorname{G}_{t, t_c, \lceil x \rceil + n} \right\}
\end{align*}
for $n \in \mb{N}$ satisfying $t_c - n^{-3/4 + \delta} > 0$. The probability of the event $\mc{D}^{\varphi, \delta}_{x, n}$ can be bounded from above as follows:
\begin{align*}
\mf{P} \left[ \mc{D}^{\varphi, \delta}_{x, n} \right]
&\leq \mf{P} \left[ \lceil x \rceil + n \leftrightarrow S^{\varphi}_n(\lceil x \rceil + n) \cap \mb{C}^{\operatorname{u}} \text{ in } \sigma^{\operatorname{u}}_{t_c}, \operatorname{G}_{t_c - n^{-3/4 + \delta}, t_c, \lceil x \rceil + n} \right] \\
&= \mf{P} \left[ 0 \leftrightarrow S^{\varphi}_n(0) \cap \mb{C}^{\operatorname{u}} \text{ in } \sigma^{\operatorname{u}}_{t_c} \right] \, \mf{P} \left[ \operatorname{G}_{0, n^{-3/4 + \delta}, 0} \right] \\
&= n^{-1/3 + o(1)} \cdot \left( 1 - \exp \left( -n^{-3/4 + \delta} \right) \right) \qquad \text{for } n \rightarrow \infty \\
&\leq n^{-1/3 + o(1)} \cdot n^{-3/4 + \delta} \qquad \text{for } n \rightarrow \infty \\
&= n^{-13/12 + \delta + o(1)} \qquad \text{for } n \rightarrow \infty \text{.}
\end{align*}
Here we first relax the condition on the times at which the connection and the growth event occur, resorting to the fact that $\sigma_t$ is monotone increasing in $t$; then we use the independence and translation-invariance of the events $\{\lceil x \rceil + n \leftrightarrow S^{\varphi}_n(\lceil x \rceil + n) \cap \mb{C}^{\operatorname{u}} \text{ in } \sigma^{\operatorname{u}}_{t_c}\}$ and $\operatorname{G}_{t_c - n^{-3/4 + \delta}, t_c, \lceil x \rceil + n}$; in the next step we apply equation (\ref{eq half-plane one-arm}); finally we use the inequality $1 - e^{-y} \leq y$ which is valid for all $y \in \mb{R}$. Since $-13/12 + \delta < -1$ holds by our choice of $\delta$, the previous estimate shows
\begin{align*}
\sum_{n = 1}^{\infty} \mf{P} \left[ \mc{D}^{\varphi, \delta}_{x, n} \right] < \infty \text{.}
\end{align*}
Invoking the Borel-Cantelli lemma again, we get
\begin{align} \label{thm critical height of destruction eq limsup D}
\mf{P} \left[ \limsup_{n \rightarrow \infty} \mc{D}^{\varphi, \delta}_{x, n} \right] = 0 \text{.}
\end{align}
Together with (\ref{thm critical height of destruction eq A subset B}) and (\ref{thm critical height of destruction eq limsup B}), equation (\ref{thm critical height of destruction eq limsup D}) yields the proof of (\ref{thm critical height of destruction eq to prove}) and hence of Theorem~\ref{thm critical height of destruction}.

\paragraph{Acknowledgement.}
I am grateful to Franz Merkl for helpful discussions and remarks. This work was supported by a scholarship from the Studienstiftung des deutschen Volkes.

\bibliography{critical_heights_of_destruction}
\bibliographystyle{alpha}

\end{document}